\numberwithin{equation}{section}  
\renewcommand{\a}{\alpha}
\newcommand{\e}{\varepsilon}
\renewcommand{\div}{{\rm div}}
\newcommand{\R}{{\mathbb R}}  
\newcommand{\1}{{1\hspace{-1.4 mm}1}}
\newcommand{\Lc}{{\mathcal L}}  
\newcommand{\Slc}{{\mathcal S}}
\newtheorem{theorem}{Theorem}[section]  
\newtheorem{lemma}[theorem]{Lemma}
\newtheorem{definition}[theorem]{Definition}
\theoremstyle{definition}
\theoremstyle{remark}  
\newtheorem{remark}[theorem]{Remark}
\title[Existence results for nonlocal equations]
{Existence of weak solutions for general nonlocal and nonlinear second-order parabolic equations}  
\author[G. Barles, P. Cardaliaguet, O. Ley and A. Monteillet]
{Guy Barles \and Pierre Cardaliaguet
 \and Olivier Ley \and Aur\'elien Monteillet}
\address{ (G. Barles, O. Ley)  
Laboratoire de Math\'ematiques et Physique Th\'eo\-ri\-que \\
F\'ed\'eration Denis Poisson \\
Universit\'e de Tours \\
Parc de Grandmont, 37200 Tours, France \\ {\tt \{barles,ley\}@lmpt.univ-tours.fr}}
\address{
(P. Cardaliaguet, A. Monteillet) Laboratoire de Math\'ematiques
  \\ CNRS UMR 6205\\  
Universit\'e de Brest \\ 6 Av. Le Gorgeu
BP 809, 29285 Brest, France \\
 {\tt \{pierre.car\-da\-liaguet, aurelien.monteillet\}@univ-brest.fr}}
\thanks{This work was partially supported by the ANR (Agence Nationale de la Recherche) 
through MICA project (ANR-06-BLAN-0082)}
\keywords{ 
Nonlocal Hamilton-Jacobi Equations, existence results, dislocation
dynamics, Fitzhugh-Nagumo system, nonlocal front  
propagation,  viscosity solutions, $L^1-$dependence in time.} 
\subjclass{49L25, 35F25, 35K65, 35A05, 35D05, 45G10, 47H10} 
\begin{document}  
  
\maketitle

\begin{abstract} 
In this article, we provide existence results for a general class 
of nonlocal and nonlinear second-order parabolic equations. 
The main motivation comes from front propagation theory in the 
cases when the normal velocity depends on the 
moving front in a nonlocal way. Among applications, we present level-set equations 
appearing in dislocations' theory and in the study of Fitzhugh-Nagumo systems.
\end{abstract} 
\section{Introduction}  

We are concerned with a class of nonlocal and nonlinear parabolic 
equations which can be written as
\begin{eqnarray} \label{eq-gene}
\left\{
\begin{array}{ll}
 u_t = H[\1_{\{ u\geq 0 \}}](x,t,u,Du,D^2 u)& {\rm in}\
 \R^N\times (0,T), \\
 u(\cdot , 0)= u_0 & {\rm in}\ \R^N,
\end{array}
\right.
\end{eqnarray}
where $u_t$, $Du$ and $D^2u$ stand respectively for the time
derivative, 
gradient and Hessian matrix with respect to the space variable 
$x$ of $u:\R^N \times [0,T]\to \R$ and where $\1_A$ denotes the
indicator 
function of a set $A$. The initial datum $u_0$ is a bounded 
and Lipschitz continuous function on $\R^N$.
\vspace*{.2cm}

For any indicator function $\chi:\R^N \times [0,T]\to \R$, 
or more generally for any 
$\chi \in L^{\infty}(\R^N \times [0,T];[0,1])$, $H[\chi]$ 
denotes a function of $(x,t,r,p,A)\in \R^N \times [0,T] 
\times \R\times \R^N \setminus \{0\} \times \Slc_N$, 
where $\Slc_N$ is the set of real, $N\times N$ symmetric 
matrices. For almost any $t\in [0,T]$, $(x,r,p,A) \mapsto 
H[\chi](x,t,r,p,A)$ is a continuous function on $\R^N 
\times \R\times \R^N \setminus \{0\} \times \Slc_N$ with 
a possible singularity at $p=0$ (when considering 
geometrical equations, see for instance Giga \cite{giga06}), 
while $t \mapsto H[\chi](x,t,r,p,A)$ is a bounded
measurable function for all $(x,r,p,A)\in \R^N 
\times \R\times \R^N \setminus \{0\} \times \Slc_N.$ 
We recall that the equation is said to be degenerate elliptic 
(or here parabolic) if, for any 
$\chi \in L^{\infty}(\R^N \times [0,T];[0,1])$, 
for any $(x,r,p)\in \R^N \times \R
\times \R^N \setminus \{0\}$, for almost every 
$t \in [0,T]$ and for all $A,B \in \Slc_N$, one has
$$
H[\chi](x,t,r,p,A) \leq H[\chi](x,t,r,p,B)\quad\hbox{if  }A\leq B,
$$
where $\leq$ stands for the usual partial ordering for symmetric matrices.
\vspace*{.2cm}

Such equations arise typically when one aims at describing, through the 
``level-set approach'', the motion of a family $\{K(t)\}_{t \in [0,T]}$ 
of closed subsets of $\R^N$ evolving with a nonlocal velocity. Indeed,
following the main idea of the level-set approach, it is 
natural to introduce a function $u$ such that
$$
K(t)=\{x \in \R^N;\; u(x,t)\geq 0\}\; ,
$$
and \eqref{eq-gene} can be seen as the level-set equation for $u$. 
In this framework, the nonlinearity $H$ corresponds to the velocity 
and, in the applications we have in mind, it depends not only on the time, the position of the front, the normal direction and the curvature 
tensor but also on nonlocal properties of $K(t)$ which are 
carried by the dependence in $\1_{\{ u\geq 0 \}}$. We may face 
rather different nonlocal dependences and this is why we have 
chosen this formulation: in any case, the equation appears 
as a well-posed equation if we would consider the nonlocal 
dependence (i.e. $\1_{\{ u\geq 0 \}}$) as being fixed; in other words,
the $H[\chi]$-equation enjoys ``good'' properties. 
\vspace*{.2cm}

Finally, we recall that, still in the case of level-set equations, 
the function $u_0$ is used to represent the initial front, \textit{i.e.}
\begin{equation} \label{cond-initiale}
\{u_0 \geq 0\}=K_0 \quad \text{and} \quad \{u_0 = 0\}=\partial{K_0}   
\end{equation}  
for some fixed compact set $K_0 \subset \R^N$.
We refer the reader to \cite{giga06} and the references therein
for precisions.
\vspace*{.2cm}

Now we turn to the main examples we have in mind.
\vspace*{.2cm}


\noindent $1.$ Dislocation dynamics equations 
$$
\begin{aligned}
&u_t = (c_0 (\cdot ,t) \star \1_{\{u(\cdot,t)\geq 0\}}(x) +c_1(x,t))|D u|,\\
\text{or} \quad & u_t=\left[\div \left(\frac{Du}{|Du|}\right) +
c_0(\cdot,t) \star \1_{\{ u(\cdot,t)\geq 0 \}}(x) +
c_1(x,t)\right]|Du|,
\end{aligned}
$$ 
where $$c_0 (\cdot ,t) \star \1_{\{u(\cdot,t)\geq 0\}}(x)=\int_{\R^N} c_0(x-y,t)  
\1_{\{u(\cdot,t)\geq 0\}}(y)dy$$
and 
$\displaystyle \div \! \left( \! \frac{Du}{|Du|} \! \right) \! (x,t)$ is the mean
curvature of the set $\{u(\cdot,t)= u(x,t)\}$ at~$x.$

Typically, the reasonnable assumptions in this context (see, for example, \cite{bclm07}) 
are the following: $c_0,c_1$ are bounded, continuous functions which are 
Lipschitz continuous in $x$ (uniformly with respect to $t$) and 
$c_0,$ $D_x c_0 \in L^\infty([0,T];L^1(\R^N))$. In particular, 
and this is a key difference with the second example below, 
$c_0$ is bounded.
\vspace{.2cm}

\noindent $2.$ Fitzhugh-Nagumo type systems, which, in a simplified
form, reduce to the nonlocal equation
$$
u_t=\a \left(\int_0^t \int_{\R^N} G(x-y,t-s)  \, \1_{\{u(y,s)\geq 0\}}
\, dy ds \right) |Du|,
$$
where $\alpha:\R\to\R$ is Lipschitz continuous and 
$G$ is the Green function of the heat equation (see \eqref{green}).

There are two key differences with the first example: 
the convolution kernel acts in space and time, and $G$ is not bounded. 
This difference plays a central role when one tries to 
prove uniqueness (cf. \cite{bclm07a}).
\vspace*{.2cm}

\noindent $3.$ Equations of the form 
$$
\begin{aligned}
&u_t=(k-\Lc^N\left(\{u(\cdot,t)\geq 0\} \right))|Du| \qquad (k\in \R),\\
\text{or} \quad & u_t=\left[\div \left(\frac{Du}{|Du|}\right) 
-\Lc^N(\{u(\cdot,t)\geq 0\})\right]|Du|,
\end{aligned} 
$$
where $\Lc^N$ denotes the $N$-dimensional Lebesgue measure and
therefore the velocity of the front at time $t$ depends on the 
volume of $K(t)=\{u(\cdot,t)\geq 0\}.$\\

In the classical cases, the equations of the level-set approach are solved by using 
the theory of viscosity solutions. Nevertheless there are two key
features which may prevent a direct use of viscosity solutions' 
theory to treat the above examples: the main problem is that these 
examples do not satisfy the right monotonicity property. 
This can be seen either through the fact that 
$\{u\geq 0\}\subset \{v\geq 0\}$ does not imply that  
$H[\1_{\{u\geq 0\}}]\leq H[\1_{\{v\geq 0\}}]$, or by remarking that 
the associated front propagations do not satisfy the 
``inclusion principle'' (geometrical monotonicity). 
Indeed, in the dislocation dynamics case, the kernel $c_0$ 
changes sign, which implies the two above facts. Therefore the 
classical comparison arguments of viscosity solutions' 
theory fail, and since existence is also based on these 
arguments through the Perron's method, the existence of 
viscosity solutions to these equations becomes an issue too.
\vspace*{.2cm}

The second (and less important) feature which prevents a 
direct use of the standard level-set approach arguments 
is the form of the nonlocal dependence in $\1_{\{u\geq 0\}}$: 
as shown in Slepcev \cite{slepcev03} and used in the 
present framework (but in the monotone case) in 
\cite{bclm07}, a dependence in $\1_{\{u\geq u(x,t)\}}$ 
is the most adapted to the level-set approach since 
all the level sets of the solutions are treated 
similarly instead of having the $0$-level set 
playing a particular role.
\vspace*{.2cm}

As a consequence, we are going to use a notion of weak 
solutions for \eqref{eq-gene} introduced in \cite{bclm07} (see
Definition \ref{def-weak}), and prove two general existence 
results. As an 
simple application of this theorem, we recover existence 
results for dislocation equations and the Fitzhugh-Nagumo 
system obtained by Giga, Goto and Ishii \cite{ggi92},
Soravia and Souganidis \cite{ss96} and in \cite{bclm07}.
Let us mention that the technique of proof of our results, using Kakutani's fixed point theorem, is the same that is used in \cite{ggi92}. Here we generalize its range of application and combine it with a new stability result of Barles \cite{barles06}. In \cite{bclm07a}, 
we prove the uniqueness of such weak
solutions for these two model equations. Note that the 
issue of uniqueness is a difficult problem and, in general, 
uniqueness does not hold as shown by the counterexample 
developed in \cite{bclm07}.
\vspace*{.2cm}

Another issue of these nonlocal equations is connected to the 
behavior of $H$ with respect to the size of the set $\{u\geq 0\}$. 
Indeed, in the dislocation dynamics case, if $c_0\in
L^{\infty}([0,T];L^1(\R^N))$, then $H[\1_{\{u\geq 0\} }]$ is defined 
without restriction on the size of $\{u\geq 0\}$. The situation is 
the same for the Fitzhugh-Nagumo system. However, if $c_0$ is only 
bounded and not in $L^1$,  or in volume-dependent equations, then the support of 
$\{u\geq 0\}$ has to be bounded for $H[\1_{\{u\geq 0\}}]$ to be
defined. This leads us to distinguish two cases, that we will 
call respectively the unbounded and the bounded case.
\vspace*{.2cm}

The paper is organized as follows: in Section 2 we give a general 
definition of a weak solution to \eqref{eq-gene}. In Section 3 we 
prove existence of such solutions in the unbounded case, and apply 
our result to dislocation equations and the Fitzhugh-Nagumo system. 
In Section 4 we treat the bounded case and give as an application 
an existence result for volume-dependent equations.
\vspace*{.2cm}

\noindent\textbf{Notation:}
In the sequel, $|\cdot|$ denotes the standard euclidean norm in 
$\R^N$, $B(x,R)$ (resp. $\bar{B}(x,R)$) is the open (resp. closed) 
ball of radius $R$ centered at $x \in \R^N$. The notation
$\mathcal{S}_N$ denotes the space of real $N \times N$ symmetric matrices.
  
\section{Definition of weak solutions}

We will use the following definition of weak solutions introduced in \cite{bclm07}. To do so, we use the notion of viscosity solutions for equations with a measurable dependence in time which we call below ``$L^1$-viscosity solution''. We refer the reader to \cite[Appendix]{bclm07} for the definition of $L^1$-viscosity solutions and \cite{ishii85, nunziante90, nunziante92, bourgoing04a,bourgoing04b} for a complete presentation of the theory.

\begin{definition}  \label{def-weak}
Let $u: \R^N \times [0,T] \to \R$ be a continuous function. We say that $u$ is 
a weak solution of \eqref{eq-gene} if there exists  $\chi \in L^{\infty}(\R^N \times [0,T];[0,1])$ such that
\begin{enumerate}  
\item $u$ is a $L^1$-viscosity solution of  
\begin{eqnarray} \label{eq-gene2}
\left\{
\begin{array}{ll}
 u_t(x,t)= H[\chi](x,t,u,Du,D^2 u)& {in}\
 \R^N\times (0,T), \\
 u(\cdot , 0)= u_0 & {in}\ \R^N.
\end{array}
\right.
\end{eqnarray}
\item For almost all $t \in [0,T]$, 
\begin{equation}\label{eq-gene22}
\1_{\{ u(\cdot,t) > 0 \}} 
\leq \chi(\cdot,t) \leq \1_{\{ u(\cdot,t) \geq 0 \}}
\ \ \ {a.e. \  in} \ \R^N.
\end{equation}
\end{enumerate}  
Moreover, we say that $u$ is a classical solution of \eqref{eq-gene} 
if in addition, for almost all $t \in [0,T]$,  
$$  
\1_{\{u(\cdot,t)>0\}}=\1_{\{u(\cdot,t)\geq0\}}\ \ \ {a.e. \  in} \ \R^N. 
$$  
\end{definition}

\begin{remark} If for any fixed $\chi\in L^{\infty}(\R^N \times [0,T];[0,1])$ the map $H[\chi]$ is geometric, then the map $\chi$ defined by
(\ref{eq-gene2})-(\ref{eq-gene22}) only depends on the $0-$level-set of the initial condition $u_0$, 
as in the classical level-set approach. Indeed, let $u_0^1:\R^N\to \R$ be another bounded continuous map 
such that
$$
\{u_0\geq 0\}=\{u_0^1\geq 0\}\; {\rm and }\; 
\{u_0> 0\}=\{u_0^1> 0\}\;,
$$
and $u^1$ be the solution to (\ref{eq-gene2}) with the same $\chi$ but with initial condition $u_0^1$
(under the assumptions of Theorem \ref{existence-gene} or Theorem \ref{existence-gene2} such a solution exists and is unique). 
Then from the key property of geometric equations (see for instance \cite{giga06}) we will have, for almost all $t\in [0,T]$,
$$
\1_{\{ u(\cdot,t) > 0 \}} =\1_{\{ u^1(\cdot,t) > 0 \}} 
\leq \chi(\cdot,t) \leq \1_{\{ u(\cdot,t) \geq 0 \}}= \1_{\{ u^1(\cdot,t) \geq 0 \}}
\ \ \ {\rm a.e. \  in} \ \R^N
$$
This means that the map $\chi$ can be interpreted as the weak solution of a nonlocal geometric flow.
\end{remark}

\section{Existence of weak solutions to \eqref{eq-gene} (unbounded case)}  
\label{sec:existence}

In this section, we are interested in the case where the Hamiltonian
$H[\chi]$ 
is defined without any restriction on the size 
of the support of $\chi$.

\subsection{The existence theorem}


We first state some assumptions which we use here but also in the next
sections. 
To avoid repeating them, we are going to formulate assumptions on the 
nonlinearities $H[\chi]$ which have to be satisfied for any $\chi \in
X$ 
(and uniformly for such $\chi$) where $X$ is a subset of 
$L^{\infty}(\R^N \times [0,T];[0,1])$. We use a different $X$ 
in this section and for the ``bounded'' case.
\vspace*{.2cm}

\noindent{\bf (H1-$X$)} $(i)$ For any $\chi \in X$, Equation
\eqref{eq-gene2} 
has a bounded uniformly continuous $L^1$-viscosity solution
$u$. Moreover, 
there exists a constant $L>0$ independent 
of $\chi \in X$ such that $|u|_{\infty} \leq L.$

\vspace*{.2cm}

$(ii)$ For any fixed $\chi \in X$, a comparison principle holds for 
Equation \eqref{eq-gene2}: if $u$ is a bounded, upper-semicontinuous $L^1$-viscosity 
subsolution of \eqref{eq-gene2} in $\R^N \times (0,T)$ and $v$ is a bounded, lower-semicontinuous $L^1$-viscosity supersolution of \eqref{eq-gene2} 
in $\R^N\times (0,T)$
with $u(\cdot ,0) \leq v(\cdot ,0)$ in $\R^N$, then
$u\leq v$ in $\R^N \times [0,T]$. 

In the same manner, if $u$ is a bounded, upper-semicontinuous $L^1$-viscosity 
subsolution of \eqref{eq-gene2} in $B(x,R) \times (0,T)$ for some $x \in \R^N$ and $R>0$,
and $v$ is a bounded, lower-semicontinuous $L^1$-viscosity supersolution of \eqref{eq-gene2} 
in $B(x,R)\times (0,T)$
with $u(y,t) \leq v(y,t)$ if $t=0$ or  $|y-x|=R$, then
$u\leq v$ in $B(x,R)\times[0,T]$.
\vspace*{.2cm}

\noindent{\bf (H2-$X$)} $(i)$ For any compact subset 
$K\subset \R^N \times \R \times \R^N \setminus \{0\}\times\mathcal{S}_N$, 
there exists a (locally bounded) modulus of continuity 
$m_K :[0,T]\times \R^+ \to \R^+$ such 
that $m_K (\cdot,\e)\to 0$ in $L^1(0,T)$ as $\e \to 0$, and
$$
| H[\chi](x_1,t,r_1,p_1,A_1) - H[\chi](x_2,t,r_2,p_2,A_2) | 
\leq \qquad \qquad \qquad \qquad \qquad \qquad \qquad \ $$
$$\ \qquad \qquad \qquad \qquad \qquad m_K (t,|x_1-x_2|+|r_1-r_2| + |p_1-p_2|+|A_1-A_2|)
$$
for any $\chi \in X$, for almost all $t\in [0,T]$ and all $(x_1,r_1,p_1,A_1),$ $(x_2,r_2,p_2,A_2)\in K$.

\vspace*{.2cm}

$(ii)$ There exists a bounded function $f(x,t,r)$, which is continuous in $x$ 
and $r$ for almost every $t$ and mesurable in $t$, such that: for any neighborhood $V$ of $(0,0)$ in 
$\R^N \setminus \{0\}  \times \Slc_N$ and any compact subset $K\subset\R^N \times \R$, 
there exists a modulus of continuity $m_{K,V} :[0,T]\times \R^+ \to \R^+$
such that $m_{K,V}(\cdot,\e)\to 0$ in $L^1(0,T)$ as $\e \to 0$, and
$$
| H[\chi](x,t,r,p,A) -f(x,t,r) | \leq m_{K,V}(t,|p|+|A|)
$$ 
for any $\chi \in X$, for almost all $t\in [0,T]$, all $(x,r) \in K$ and $(p,A)\in V$.

\vspace*{.2cm}

$(iii)$ If $\chi_n  \rightharpoonup \chi$ weakly-$*$ in 
$L^{\infty}(\R^N \times [0,T];[0,1])$ with $\chi_n,\chi\in X$ for all $n$, then for all 
$(x,t,r,p,A)\in \R^N\times [0,T] \times \R\times  \R^N \setminus 
\{0\} \times \mathcal{S}_N,$
\begin{eqnarray*}
\int_0^t H[\chi_n](x,s,r,p,A)ds 
\underset{n \to +\infty}{\longrightarrow} \int_0^t H[\chi](x,s,r,p,A)ds
\end{eqnarray*}
locally uniformly  for $t \in [0,T].$ \\

We finally add an assumption which is not optimal on the behavior of
$H[\chi]$ with respect to $r$ but to which we can reduce, in most cases, 
after some change of unknown functions like $u \to u\exp(\gamma t)$:
\vspace*{.2cm}

\noindent{\bf (H3-$X$)} For any $\chi \in X$, for almost every $t\in [0,T]$, for all $(x,p,A)\in \R^N\times \R^N \setminus 
\{0\} \times \mathcal{S}_N,$ and for any $r_1\geq r_2$
$$H[\chi](x,t,r_1,p,A) \geq H[\chi](x,t,r_2,p,A)\; .$$
Of course, gathering {\bf (H2-$X$)} and {\bf (H3-$X$)}, 
it is easy to show that $f$ satisfies the same property.

\vspace{.2cm}

We have chosen to state Assumption {\bf (H1-$X$)} in this form which may look
artificial: it means that we have existence, uniqueness of a 
continuous $L^1$-viscosity solution $u$ associated to any measurable {\em fixed} function 
$0\leq \chi \leq 1.$ For conditions on $H$ under which {\bf (H1-$X$)} is verified, we refer to 
\cite{bourgoing04a,nunziante92} and Section \ref{sec:appli-unbound}.
Moreover, {\bf (H1-$X$)} $(i)$ states that the $u$'s are bounded uniformly
with respect to $\chi\in X $: for the 
geometrical equations of the level-set approach, this uniform bound on $u$ 
is automatically satisfied with
$L=||u_0||_\infty$ if {\bf (H1-$X$)} $(ii)$ holds, using that, in this case, constants 
are $L^1$-viscosity solutions of \eqref{eq-gene2}. 
Assumption {\bf (H2-$X$)} comes from 
\cite{barles06} and will be used to apply a stability result 
for equations with $L^1$-dependence in time.

Our general existence theorem is the following:
\begin{theorem}  \label{existence-gene}
Assume that {\bf (H1-$X$)}, {\bf (H2-$X$)} and {\bf (H3-$X$)} hold with $X= L^{\infty}(\R^N \times [0,T];[0,1])$.
Then there exists at least a weak solution to \eqref{eq-gene}. 
\end{theorem}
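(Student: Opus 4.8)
The plan is to recast the existence of a weak solution as a set-valued fixed-point problem on $X=L^{\infty}(\R^N\times[0,T];[0,1])$ and to apply Kakutani's theorem, following the strategy of \cite{ggi92}. First I would equip $X$ with the weak-$*$ topology of $L^{\infty}(\R^N\times[0,T])=\big(L^1(\R^N\times[0,T])\big)^*$. Since $X$ is convex, bounded and weak-$*$ closed, Banach--Alaoglu makes it weak-$*$ compact; as $L^1(\R^N\times[0,T])$ is separable, this topology is metrizable on $X$, so $X$ is a convex compact subset of a locally convex space. To each $\chi\in X$ I attach, via \textbf{(H1-$X$)}, the unique bounded uniformly continuous $L^1$-viscosity solution $u_\chi$ of \eqref{eq-gene2}, and I define the set-valued map
\[
\Phi(\chi)=\Big\{\,\tilde\chi\in X\ :\ \1_{\{u_\chi(\cdot,t)>0\}}\leq\tilde\chi(\cdot,t)\leq\1_{\{u_\chi(\cdot,t)\geq 0\}}\ \text{a.e. in }\R^N,\ \text{for a.e. }t\in[0,T]\,\Big\}.
\]
A fixed point $\chi^*\in\Phi(\chi^*)$ then yields a weak solution in the sense of Definition \ref{def-weak}: condition $(1)$ holds because $u_{\chi^*}$ solves \eqref{eq-gene2} with $\chi=\chi^*$, and condition $(2)$ is exactly the membership $\chi^*\in\Phi(\chi^*)$, i.e. \eqref{eq-gene22}.

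Next I would verify the hypotheses of Kakutani's theorem. For each $\chi$, the value $\Phi(\chi)$ is nonempty (it contains $\1_{\{u_\chi(\cdot,t)\geq0\}}$), convex (the defining constraint is a pair of pointwise linear inequalities, stable under convex combinations) and weak-$*$ closed. The crucial point is the upper semicontinuity of $\Phi$, which in this compact metrizable setting I would obtain by showing that $\Phi$ has a closed graph: if $\chi_n\rightharpoonup\chi$ weak-$*$, $\tilde\chi_n\in\Phi(\chi_n)$ and $\tilde\chi_n\rightharpoonup\tilde\chi$ weak-$*$, then $\tilde\chi\in\Phi(\chi)$. This splits into two independent pieces.

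The first piece is the stability of the PDE, namely $u_{\chi_n}\to u_\chi$ locally uniformly. Here I use \textbf{(H2-$X$)}: parts $(i)$--$(ii)$ provide the equicontinuity and structural control, while part $(iii)$ gives $\int_0^t H[\chi_n]\,ds\to\int_0^t H[\chi]\,ds$ locally uniformly. Feeding this into the stability result for equations with $L^1$-dependence in time of \cite{barles06}, the half-relaxed limits $\overline u=\limsup^{*}u_{\chi_n}$ and $\underline u=\liminf_{*}u_{\chi_n}$ are respectively an $L^1$-viscosity sub- and supersolution of \eqref{eq-gene2} for the limit $\chi$; the comparison principle \textbf{(H1-$X$)}$(ii)$ then forces $\overline u\leq\underline u$, hence (with $\underline u\leq\overline u$ always) $u_{\chi_n}\to u_\chi$ locally uniformly, using the uniform bound $|u_{\chi_n}|_\infty\leq L$ from \textbf{(H1-$X$)}$(i)$ and uniqueness to identify the limit. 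The second piece is the passage to the limit in the indicator constraints. Since $u_\chi$ is continuous, $\{u_\chi<0\}$ and $\{u_\chi>0\}$ are open in $\R^N\times[0,T]$. On any compact subset of $\{u_\chi<0\}$, local uniform convergence gives $u_{\chi_n}<0$ for $n$ large, whence $\tilde\chi_n\leq\1_{\{u_{\chi_n}\geq0\}}=0$ there; testing against nonnegative $\varphi\in L^1$ supported in that set and using $\tilde\chi_n\rightharpoonup\tilde\chi$ yields $\tilde\chi=0$ a.e. on $\{u_\chi<0\}$, i.e. $\tilde\chi\leq\1_{\{u_\chi\geq0\}}$. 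Symmetrically, on compact subsets of $\{u_\chi>0\}$ one has $\tilde\chi_n\geq\1_{\{u_{\chi_n}>0\}}=1$ for $n$ large, giving $\tilde\chi=1$ a.e. on $\{u_\chi>0\}$, i.e. $\tilde\chi\geq\1_{\{u_\chi>0\}}$. Exhausting these open sets by compact subsets finishes the proof that $\tilde\chi\in\Phi(\chi)$.

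With all hypotheses checked, Kakutani's fixed point theorem provides $\chi^*\in\Phi(\chi^*)$, and by the reduction above $u_{\chi^*}$ is the desired weak solution. I expect the main obstacle to be the stability step, i.e. justifying $u_{\chi_n}\to u_\chi$: this is where the merely measurable ($L^1$) time dependence of $H$ is genuinely delicate, and where \textbf{(H2-$X$)} together with the $L^1$-stability theorem of \cite{barles06} is indispensable; the role of \textbf{(H3-$X$)} is only the auxiliary monotonicity in $r$ underlying the well-posedness and comparison used above.
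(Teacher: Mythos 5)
Your strategy coincides with the paper's: the same set-valued map, Kakutani's fixed point theorem on $X$ with the $L^\infty$-weak-$*$ topology, Barles' $L^1$-stability theorem applied to the half-relaxed limits, and a passage to the limit in the indicator constraints (your compact-exhaustion argument there is a harmless variant of the paper's Fatou's lemma argument, and works). But there is a genuine gap at the heart of the stability step. You assert that, once $\overline u$ and $\underline u$ are known to be sub- and supersolution of \eqref{eq-gene2} for the limit $\chi$, ``the comparison principle \textbf{(H1-$X$)}$(ii)$ then forces $\overline u\leq\underline u$''. Comparison, however, requires the ordering of the two functions at $t=0$, and that is precisely what is unknown at this stage: the half-relaxed limits are taken over $(y,s)\to(x,0)$ with $s>0$ admitted, so from $u_n(\cdot,0)=u_0$ one only gets the useless inequalities $\overline u(x,0)\geq u_0(x)\geq \underline u(x,0)$. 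Since the moduli of continuity of the $u_n$ are not uniform in $n$ (only the $L^\infty$ bound $L$ is), nothing prevents a priori $\overline u(\cdot,0)>u_0$ somewhere, and comparison cannot be invoked.

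Closing this gap is the substantial part of the paper's proof. For fixed $x$, the Lipschitz continuity of $u_0$ gives $u_0(y)\leq u_0(x)+\frac{|x-y|^2}{2\e^2}+\frac{\|Du_0\|_\infty\e^2}{2}$, and one shows that
$$
\psi_\e(y,t)=u_0(x)+\frac{|x-y|^2}{2\e^2}+\frac{\|Du_0\|_\infty\e^2}{2}+C_\e t
$$
is, for $C_\e$ large enough, a supersolution of every $H[\chi_n]$-equation on the small ball $B(x,\e)$; this uses \textbf{(H3-$X$)} together with the uniform local boundedness of the $H[\chi_n]$ coming from \textbf{(H2-$X$)} (part $(ii)$ being needed to control $H$ near $p=0$, where $D\psi_\e$ vanishes). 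Then the \emph{local} comparison principle in balls --- the second half of \textbf{(H1-$X$)}$(ii)$, which your proof never invokes, a symptom of the missing step --- yields $u_n\leq\psi_\e$ on $B(x,\e)\times[0,T]$, hence $\overline u\leq\psi_\e$; evaluating at $(y,t)=(x,0)$ and letting $\e\to 0$ gives $\overline u(x,0)\leq u_0(x)$, and symmetrically $\underline u(x,0)\geq u_0(x)$. Only after this can \textbf{(H1-$X$)}$(ii)$ be applied to get $\overline u\leq\underline u$, the identification $u=\overline u=\underline u=u_\chi$, and the local uniform convergence $u_n\to u_\chi$ that the rest of your argument relies on. This also corrects your closing remark: \textbf{(H3-$X$)} is not merely auxiliary monotonicity ``underlying the well-posedness and comparison''; it enters exactly in this barrier construction, which your proof omits.
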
  
\begin{remark} See also \cite{barles06} for the stability of weak solutions. 
\end{remark}
\begin{proof} From {\bf (H1-$X$)}, the set-valued mapping 
\begin{eqnarray*}
\begin{array}{cccl}  
\xi : & X & \rightrightarrows &  X\\
      &  \chi                            &\mapsto            &  \big\{
      \chi'\; : \; \1_{\{ u(\cdot,t) > 0 \}}  \leq \chi'(\cdot,t) \leq
      \1_{\{ u(\cdot,t) \geq 0 \}} \; \text{for
        almost all} \;  t \in [0,T] ,\\[1mm] 
      &                                  &                   & 
      \quad \text{where} \; u \; 
      \text{is the $L^1$-viscosity solution of} \; \eqref{eq-gene2} \big\},  
\end{array}
\end{eqnarray*}  
is well-defined. Clearly, there exists a weak solution to
\eqref{eq-gene} 
if there exists a fixed point $\chi$ of
$\xi$, which means that $\chi \in \xi(\chi)$. 
In this case the corresponding $u$ is a weak solution of \eqref{eq-gene}. 
We therefore aim at using Kakutani's fixed point theorem for set-valued
mappings (see \cite[Theorem 3 p. 232]{ac84}). 
  
In the Hausdorff convex space $L^\infty(\R^N \times [0,T];\R)$, the subset $X$ is convex and compact for
the $L^\infty$-weak-$*$ topology (since it is closed and bounded). In the same way,
for any $\chi \in X$, $\xi(\chi)$ is
a non-empty convex compact subset of $X$ for
the $L^\infty$-weak-$*$ topology.

Let us check that $\xi$ is upper semicontinuous for this topology. It suffices
to show that, if  
\begin{eqnarray*}
\chi_n \in X \mathop{\rightharpoonup}_{L^\infty\text{-weak-}*} \chi  
\quad \text{and} \quad
\chi'_n \in \xi(\chi_n) \mathop{\rightharpoonup}_{L^\infty\text{-weak-}*} \chi',
\end{eqnarray*}   
then  
\begin{eqnarray*}
\chi' \in \xi(\chi).  
\end{eqnarray*} 
Let $u_n$ be the unique $L^1$-viscosity solution of \eqref{eq-gene2} 
associated to $\chi_n$ by
{\bf (H1-$X$)}. Using {\bf (H1-$X$)}, we know that the $u_n$'s 
are uniformly bounded. We can therefore define the half-relaxed 
limits\footnote{$\displaystyle {\rm lim\,sup}^* u_n (x,t)
:=\limsup_{{\displaystyle{\mathop{\scriptstyle{(y,s)\to(x,t)}}_{n\to \infty}}}}
\,u_n(y,s)$ and $\displaystyle {\rm lim\,inf}_* u_n (x,t)
:=\liminf_{{\displaystyle{\mathop{\scriptstyle{(y,s)\to(x,t)}}_{n\to \infty}}}}\,u_n(y,s).$}
\begin{eqnarray*}
\overline{u}={\rm lim\,sup}^* u_n \quad
 {\rm and  } \ \underline{u}={\rm lim\,inf}_* u_n. {\rm }
\end{eqnarray*} 
From {\bf (H2-$X$)} (convergence of the Hamiltonians), we can apply Barles' stability result
\cite[Theorem 1.1]{barles06} to obtain that $\overline{u}$ 
(respectively $\underline{u}$) is a  $L^1$-viscosity subsolution 
(respectively supersolution) of \eqref{eq-gene2} associated to $\chi$. 

\vspace{.2cm}

In order to apply {\bf (H1-$X$)} $(ii)$ (comparison), 
we first have to show that
$$
\overline{u}(x,0)\leq u_0(x) \leq \underline{u}(x,0)\quad\hbox{in }\R^N\; .
$$
To do so, we examine {\bf (H2-$X$)} and deduce that $H[\chi]
(x,t,0,p,A)$ is 
bounded if $p$ and $A$ are bounded, uniformly with respect to
$\chi \in X$. 
Moreover $u_0$ is Lipschitz continuous, and therefore for all 
$0<\e \leq 1$, we have, for any $x,y \in \R^N$,
$$ 
u_0(y) \leq u_0(x) +||Du_0||_\infty |x-y| \leq u_0(x) 
+ \frac{|x-y|^2}{2\e^2} + \frac{||Du_0||_\infty\e^2}{2}\; .
$$
We fix $x$ and we argue in the ball $B(x,\e)$. Using  {\bf (H3-$X$)} and the fact that the $H[\chi_n]$'s 
are locally bounded, the function
$$ 
\psi_\e(y,t) = u_0(x) + \frac{|x-y|^2}{2\e^2} 
+ \frac{||Du_0||_\infty\e^2}{2} + C_\e t
$$
is a supersolution of the $H[\chi_n]$-equation in the ball $B(x,\e)$ provided 
that $C_\e$ is  large enough.  By {\bf (H1-$X$)} $(ii)$ (comparison), we obtain
$$ u_n(y,t) \leq  \psi_\e(y,t) 
\quad\hbox{in  }B(x,\e) \times [0,T]\; ,$$
and then
$$ 
\overline{u}(y,t) \leq  \psi_\e(y,t) 
\quad\hbox{in  }B(x,\e) \times [0,T]\; .
$$
Examining the right-hand side at $(y,t)=(x,0)$ and letting $\e\to 0$ provides the 
inequality $\overline{u}(x,0) \leq u_0 (x)$. An 
analogous argument gives $\underline{u}(x,0) \geq u_0 (x)$.

By {\bf (H1-$X$)} $(ii)$ (comparison), we therefore 
have $\overline{u}\leq \underline{u}$ in $\R^N$,
which implies that $u:= \overline{u}=\underline{u}$ is 
the unique  continuous $L^1$-viscosity solution of 
\eqref{eq-gene2} associated to $\chi,$ as well as the 
local uniform convergence of $u_n$ to $u$.

\vspace{.2cm}

Moreover, since $\chi'_n\in \xi(\chi_n),$ we have,
for all $\varphi\in L^1(\R^N\times [0,T];\R_+),$
\begin{eqnarray*} 
\int_0^T \int_{\R^N} \varphi\,\1_{\{ u_n(\cdot,t) > 0 \}} 
\leq \int_0^T \int_{\R^N} \varphi\,\chi'_n
 \leq \int_0^T \int_{\R^N} \varphi\,\1_{\{ u_n(\cdot,t) \geq 0 \}}.
\end{eqnarray*}
Since $\displaystyle \chi'_n  \mathop{\rightharpoonup}_{L^\infty\text{-weak-}*} \chi',$
applying Fatou's Lemma, we get
\begin{eqnarray*}
\int_0^T \int_{\R^N} \varphi \,{\rm lim\,inf} \,\1_{\{ u_n(\cdot,t) > 0  \}} 
\leq \int_0^T \int_{\R^N} \varphi\,\chi'
 \leq \int_0^T \int_{\R^N} \varphi \,{\rm lim\,sup} \,\1_{\{ u_n(\cdot,t) \geq 0 \}}.
\end{eqnarray*}
But $\1_{\{ u(\cdot,t)>0 \}} \leq {\rm lim\,inf} \,\1_{\{ u_n(\cdot,t)>0 \}}$
and ${\rm lim\,sup} \,\1_{\{ u_n(\cdot,t) \geq 0 \}}\leq 
\1_{\{ u(\cdot,t) \geq 0 \}}.$ It follows that
\begin{eqnarray*}
\1_{\{ {u}(\cdot,t)>0 \}}\leq \chi'\leq \1_{\{ {u}(\cdot,t) \geq 0 \}}
\quad \text{for a.e.} \, t\in [0,T],
\end{eqnarray*}
and therefore $\chi'\in \xi (\chi).$

We infer the existence of a weak solution of \eqref{eq-gene} by  
Kakutani's fixed point theorem \cite[Theorem 3 p. 232]{ac84},
as announced.
\end{proof}
%


\subsection{Applications}\label{sec:appli-unbound}
\subsubsection{Dislocation dynamics equations}

One important example for which Theorem \ref{existence-gene} provides a weak
solution is the dislocation dynamics equation (see \cite{ahlm06}, \cite{bclm07} and the
references therein), which reads
\begin{equation}\label{eq-dislocation} 
\left\{
\begin{array}{ll}
u_t = (c_0 (\cdot ,t) \star \1_{\{u(\cdot,t)\geq 0\}}(x) 
+c_1(x,t))|D u|   & \mbox{in} \; \R^N\times (0,T),\\ 
u(\cdot,0)=u_0 &\mbox{in} \; \R^N, 
\end{array}\right. 
\end{equation} 
where
\begin{eqnarray*} 
c_0 (\cdot ,t) \star \1_{\{u(\cdot,t)\geq 0\}}(x)=\int_{\R^N} c_0(x-y,t)  
\1_{\{u(\cdot,t)\geq 0\}}(y)dy. 
\end{eqnarray*} 
We assume that $c_0$ and $c_1$ satisfy the following assumptions:

\vspace{.2cm}

\noindent{\bf (A)} $(i)$ $c_0\in C^0([0,T]; L^1\left(\R^N \right)),$
$c_1\in C^0(\R^N\times [0,T];\R)$.

\vspace{.2cm}

$(ii)$ $D c_0\in L^\infty ([0,T]; L^1 (\R^N )).$

\vspace{.2cm}

$(iii)$ There exists a constant $M$ such that, for any $x,y\in \R^N$ and $t\in [0,T]$
\begin{eqnarray*}
|c_1(x,t)|\leq M \quad \hbox{and} \quad |c_1(x,t)-c_1(y,t)|\le M|x-y|.
 \end{eqnarray*} 

\begin{theorem} \label{existence-dislo}
Under assumption \noindent{\bf (A)}, Equation \eqref{eq-dislocation} 
has at least a weak solution. Moreover, if, for all 
$(x,t,\chi) \in \R^N \times [0,T]\times L^\infty(\R^N\times [0,T];[0,1]),$
\begin{equation}\label{hyp-cla} 
c_0 (\cdot ,t) \star \chi (\cdot ,t) (x) +c_1(x,t)\geq 0, 
\end{equation} 
and if there exists $\eta >0$ with
\begin{eqnarray}\label{borne-inf0}
|u_0| + |Du_0|\geq \eta \ \ \ {\rm in} \ \R^N \ {\rm in \ the \ viscosity \ sense,}
\end{eqnarray}
then any weak solution is classical.
\end{theorem}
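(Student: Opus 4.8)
The plan is to read off the existence statement from Theorem \ref{existence-gene} by checking that the dislocation Hamiltonian
$$H[\chi](x,t,r,p,A)=\big(c_0(\cdot,t)\star\chi(\cdot,t)(x)+c_1(x,t)\big)|p|$$
satisfies \textbf{(H1-$X$)}--\textbf{(H3-$X$)} with $X=L^{\infty}(\R^N\times[0,T];[0,1])$, and then to treat classicality separately. The decisive observation for the first task is that, by \textbf{(A)} and the identity $D_x(c_0(\cdot,t)\star\chi(\cdot,t))=Dc_0(\cdot,t)\star\chi(\cdot,t)$, the coefficient $c(x,t):=c_0(\cdot,t)\star\chi(\cdot,t)(x)+c_1(x,t)$ is bounded by $\|c_0\|_{L^{\infty}([0,T];L^1)}+M$ and Lipschitz in $x$ with constant $L_c:=\|Dc_0\|_{L^{\infty}([0,T];L^1)}+M$, both uniformly in $t$ and in $\chi\in X$. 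Since the equation is geometric, constants are solutions, so \textbf{(H1-$X$)} holds with $L=\|u_0\|_{\infty}$, the comparison principle being the standard one for bounded, Lipschitz-in-$x$, measurable-in-$t$ eikonal equations (cf. \cite{nunziante92,bourgoing04a}). Parts $(i)$--$(ii)$ of \textbf{(H2-$X$)} follow from the bound $\big|c(x_1,t)|p_1|-c(x_2,t)|p_2|\big|\le L_c R|x_1-x_2|+\bar c\,|p_1-p_2|$ on compact sets and the choice $f\equiv 0$, all moduli being $t$-independent hence convergent in $L^1(0,T)$; note there is no genuine singularity at $p=0$. \textbf{(H3-$X$)} is trivial as $H$ is independent of $r$. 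The only delicate point is \textbf{(H2-$X$)}$(iii)$: for fixed $x$ the map $(y,s)\mapsto c_0(x-y,s)\1_{[0,t]}(s)$ belongs to $L^1(\R^N\times[0,T])$, so weak-$*$ convergence $\chi_n\rightharpoonup\chi$ gives pointwise-in-$t$ convergence of $\int_0^t c_0(\cdot,s)\star\chi_n(\cdot,s)(x)\,ds$, and since these functions of $t$ are equi-Lipschitz with constant $\sup_s\|c_0(\cdot,s)\|_{L^1}$ the convergence is uniform on $[0,T]$.

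For classicality, fix a weak solution $u$ with associated $\chi$, so $u$ solves $u_t=c(x,t)|Du|$ with $c\ge 0$ by \eqref{hyp-cla}. Then $u_0$ is a subsolution, comparison gives $u(\cdot,t)\ge u_0$, and a time-shift argument shows $t\mapsto u(x,t)$ is nondecreasing for every $x$. The conclusion $\1_{\{u(\cdot,t)>0\}}=\1_{\{u(\cdot,t)\ge 0\}}$ a.e.\ for a.e.\ $t$ is equivalent to $\Lc^N(\{u(\cdot,t)=0\})=0$ for a.e.\ $t$, i.e.\ to $\Lc^{N+1}$-negligibility of the space-time set $\{u=0\}$ by Fubini. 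I would deduce this from a lower gradient bound along the front: if $|u(\cdot,t)|+|Du(\cdot,t)|\ge\eta'>0$ in the viscosity sense, then, $u(\cdot,t)$ being Lipschitz in $x$, at $\Lc^N$-a.e.\ point of $\{u(\cdot,t)=0\}$ the classical gradient exists (Rademacher) and must have modulus $\ge\eta'$; but at a Lebesgue density point of $\{u(\cdot,t)=0\}$ where $u(\cdot,t)$ is differentiable the gradient necessarily vanishes. Hence $\{u(\cdot,t)=0\}$ carries no density point and is $\Lc^N$-negligible.

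The heart of the matter, and the step I expect to be the main obstacle, is therefore to propagate the non-degeneracy \eqref{borne-inf0}: to show that $|u_0|+|Du_0|\ge\eta$ forces $|u(\cdot,t)|+|Du(\cdot,t)|\ge\eta e^{-L_c t}$ in the viscosity sense for all $t$. Formally this is the Gronwall estimate obtained by differentiating $u_t=c|Du|$ in $x$ and following the flow $\dot x=-c\,Du/|Du|$, along which $\tfrac{d}{dt}|Du|\ge-|D_xc|\,|Du|\ge-L_c|Du|$; the rigorous version is a doubling-of-variables comparison argument carried out at the level of $L^1$-viscosity solutions. Two difficulties arise: the singularity of $Du/|Du|$ at $Du=0$, which is controlled precisely because the propagated bound keeps $|Du|$ bounded away from $0$ near the front, and the risk that the estimate degrades where characteristics focus --- it is exactly here that the sign condition \eqref{hyp-cla} enters, since $c\ge 0$ makes the front purely expanding and rules out the colliding characteristics that would otherwise destroy the lower bound. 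Once the propagated bound is established, the density-point argument above applies for every $t$ and Fubini yields that the weak solution is classical for almost every $t$.
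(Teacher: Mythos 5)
Your existence argument is correct and is essentially the paper's own proof: the paper likewise verifies {\bf (H1-$X$)}--{\bf (H3-$X$)} for $H[\chi](x,t,p)=[c_0(\cdot,t)\star\chi(\cdot,t)(x)+c_1(x,t)]|p|$ and applies Theorem \ref{existence-gene}, with exactly your weak-$*$ computation for {\bf (H2-$X$)}$(iii)$ (your equi-Lipschitz-in-$t$ upgrade from pointwise to locally uniform convergence is a detail the paper glosses over, and it is the right one); the remaining items of {\bf (H1-$X$)} are, in the paper as in your text, delegated to the literature (\cite{bclm07}, \cite{ley01}, \cite{nunziante92}).

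The genuine gap is in the classicality part. Your reduction is fine: the Rademacher/density-point argument correctly shows that if $u(\cdot,t)$ is Lipschitz and satisfies $|u(\cdot,t)|+|Du(\cdot,t)|\ge \eta'>0$ in the viscosity sense, then $\Lc^N(\{u(\cdot,t)=0\})=0$, which is what classicality means. But the step you yourself call ``the heart of the matter'' --- propagating \eqref{borne-inf0} into $|u(\cdot,t)|+|Du(\cdot,t)|\ge\eta e^{-L_c t}$ for a weak solution, using \eqref{hyp-cla} --- is never proved; you give only the formal characteristics/Gronwall computation and the assertion that ``the rigorous version is a doubling-of-variables comparison argument.'' This propagation estimate is precisely the content of the result the paper invokes at this point, namely \cite[Theorem 1.3]{bclm07}, whose proof rests on the lower-bound gradient estimates of \cite{ley01}; it is a substantive theorem, not a routine exercise. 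Two difficulties your sketch does not resolve: (a) the velocity $c=c_0\star\chi+c_1$ attached to a weak solution is only \emph{measurable} in time (since $\chi$ is a general element of $L^\infty$), so the estimate must be established in the $L^1$-viscosity framework, where differentiating the equation along characteristics has no direct meaning; (b) the way the sign condition $c\ge 0$ enters must be made precise --- the counterexample constructed in \cite{bclm07} shows that both the estimate and classicality genuinely fail for sign-changing velocities --- and your remark about ``colliding characteristics'' is a heuristic, not an argument. So your route is the correct one and would not fail, but as written the proof of the second assertion of the theorem is incomplete at its hardest point; to close it you must either prove the propagation estimate or cite \cite{ley01} and \cite[Theorem 1.3]{bclm07}, as the paper does.
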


\begin{proof}
This theorem is proved by Barles, Cardaliaguet, Ley and Monneau 
in \cite[Theorem 1.2]{bclm07}. Another proof can
be done using Theorem \ref{existence-gene}. First we note that
Equation \eqref{eq-dislocation} is a first-order particular case of \eqref{eq-gene}
with, for all $(x,t,p,\chi)\in \R^N\times [0,T] \times  \R^N\times
L^\infty(\R^N\times [0,T];[0,1]),$
\begin{eqnarray*}
H[\chi](x,t,p)= 
[c_0 (\cdot ,t) \star \chi(\cdot ,t) (x) +c_1(x,t)]|p|.  
 \end{eqnarray*} 
Assumption {\bf (H1-$X$)} $(i)$ is given by \cite[Theorem 5.4]{bclm07}
and \cite{ley01} (for the regularity part), 
while assumption {\bf (H1-$X$)} $(ii)$ 
holds thanks to the results of \cite{nunziante92}. Assumption {\bf (H2-$X$)} 
is given by \cite[proof of Theorem 1.2]{bclm07}. It essentially amounts to noticing that if $\displaystyle \chi_n  \mathop{\rightharpoonup} \chi$ 
in $L^\infty\text{-weak-}*$, then by the definition of this convergence
$$
\begin{aligned}
\int_0^t c_0(\cdot,s) \star \chi_n(\cdot,s)(x)\, ds = &\int_0^t \int_{\R^N} c_0(x-y,s) \chi_n(y,s) \, dy ds\\
\to &\int_0^t \int_{\R^N} c_0(x-y,s) \chi(y,s) \, dy ds \\ =&\int_0^t c_0(\cdot,s) \star \chi(\cdot,s)(x) \, ds. 
 \end{aligned}
 $$
Finally, if \eqref{hyp-cla} and \eqref{borne-inf0} hold, the solutions are classical
by \cite[Theorem 1.3]{bclm07}.
\end{proof}

We can also consider the dislocation dynamics equation with an
additionnal mean curvature term,
\begin{equation} \label{ATW:level-set-equation}
u_t=\left[\div \left(\frac{Du}{|Du|}\right) +
c_0(\cdot,t) \star \1_{\{ u(\cdot,t)\geq 0 \}}(x) +
c_1(x,t)\right]|Du|,
\end{equation}
which has been studied by Forcadel and Monteillet \cite{fm07} for instance. 
Theorem \ref{existence-gene} also provides a weak solution 
to \eqref{ATW:level-set-equation}. In \cite{fm07}, however, 
the authors study the problem with the particular tool of 
minimizing movements, which enables them to construct a weak 
solution with $\chi$ of the form $\1_E$, with good regularity 
properties of $t \mapsto E(t)$. This is due to the particular 
structure of \eqref{ATW:level-set-equation}, namely the presence 
of the regularizing mean curvature term. Here we can deal with
more general nonlocal degenerate parabolic equations.

\subsubsection{A Fitzhugh-Nagumo type system}

We are also interested in the following system,
\begin{eqnarray} \label{system}
\hspace*{0.9cm}
\left\{
  \begin{array}{ll} 
u_t=\alpha (v)|Du| & {\rm in} \  \R^N \times (0,T),  \\  
v_t-\Delta v = (g^+(v)\1_{\{ u\geq 0 \}}+g^-(v)(1-\1_{\{
  u\geq 0 \}})) & {\rm in} \  \R^N \times (0,T),\\
u(\cdot ,0)=u_0, \ v(\cdot ,0)=v_0  & {\rm in} \  \R^N,
\end{array}    
\right.
\end{eqnarray}  
which is obtained as the asymptotics as $\e \to 0$ of the following
Fitzhugh-Nagumo 
system arising in neural wave propagation or chemical kinetics (see \cite{ss96}):
\begin{eqnarray}\label{system-eps}
\left\lbrace
\begin{array}{l}
u^{\e}_t-\e \Delta u^{\e}=\e^{-1}f(u^{\e},v^{\e}), \\
v^{\e}_t-\Delta v^{\e}=g(u^{\e},v^{\e})
\end{array}
\right.
\end{eqnarray}
in $\R^N \times (0,T)$, where for $(u,v)\in \R^2$,
\begin{eqnarray*}
\left\lbrace
\begin{array}{ll}
f(u,v)= u(1-u)(u-a) -v & (0<a<1), \\
g(u,v)=u-\gamma v & (\gamma >0).
\end{array}
\right.
\end{eqnarray*}
The functions $\alpha$, $g^+$ and $g^-:\R \to \R$ appearing 
in \eqref{system} are associated with $f$ and $g$. This system 
has been studied in particular by Giga, Goto and Ishii \cite{ggi92} and 
Soravia, Souganidis \cite{ss96}. They proved existence of a weak 
solution to \eqref{system}. Here we recover their result as an application of Theorem \ref{existence-gene}.

\vspace{.2cm}

If for $\chi \in L^{\infty}(\R^N \times [0,T];[0,1])$, $v$ denotes the solution of
\begin{eqnarray}\label{heat-eq}
\left\{
\begin{array}{ll}
v_t-\Delta v = g^+(v)\chi+g^-(v)(1-\chi) & {\rm in} \,  \R^N \times (0,T), \\
v(\cdot ,0)=v_0  & {\rm in} \, \R^N,
\end{array} \right.
\end{eqnarray}  
and if $c[\chi](x,t) := \a(v(x,t))$, then Problem \eqref{system} reduces to 
\begin{eqnarray} \label{eq-gene-fn}  
\left\lbrace  
\begin{array}{ll}  
u_t(x,t)=c[\1_{\{ u \geq 0 \}}](x,t)|Du(x,t)| &\text{in} \; \R^N \times (0,T),\\  
u(\cdot,0)=u_0 &\text{in} \; \R^N,
\end{array}
\right.  
\end{eqnarray} 
which is a particular case of \eqref{eq-gene}. Let us now state the 
existence theorem of \cite{ggi92} and \cite{ss96} that we can recover 
from our general existence theorem. We first gather the assumptions 
satisfied by $\a$, $g^-$, $g^+$ and $v_0$:

\vspace{.2cm}

\noindent{\bf (B)} $(i)$ $\alpha$ is Lipschitz continuous on $\R$.

\vspace{.2cm}

$(ii)$ $g^+$ and $g^-$ are Lipschitz continuous 
on $\R^N$, and there exist $\underline{g}$ and $\overline{g}$ in $\R$ such that
$$
\underline{g} \leq g^-(r) \leq g^+(r) \leq \overline{g} 
\quad {\rm for  \ all} \  r \ {\rm in} \ \R.
$$ 

$(iii)$ $v_0$ is bounded and of class $C^1$ with $\|Dv_0\|_{\infty}
<+\infty$.

\begin{theorem} \label{existence-fn}
Under assumption \noindent{\bf (B)}, the problem \eqref{eq-gene-fn}, 
or equivalently the system
\eqref{system}, 
has at least a weak solution. If in addition \eqref{borne-inf0} holds and
$\alpha\geq 0,$ 
then any weak solution is classical.
\end{theorem}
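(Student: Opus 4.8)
The plan is to deduce Theorem~\ref{existence-fn} from the general existence result Theorem~\ref{existence-gene}, applied to the reformulated problem \eqref{eq-gene-fn} with $X = L^\infty(\R^N\times[0,T];[0,1])$ and the first-order Hamiltonian $H[\chi](x,t,p) = \alpha(v(x,t))|p|$, where $v=v[\chi]$ solves the semilinear heat equation \eqref{heat-eq}. The whole difficulty is thus transferred to checking that this $H$ satisfies \textbf{(H1-$X$)}, \textbf{(H2-$X$)} and \textbf{(H3-$X$)}, and the key preliminary is to obtain estimates on $v[\chi]$ that are \emph{uniform in $\chi\in X$}. Writing $v$ through Duhamel's formula
\[
v(x,t) = (G(\cdot,t)\star v_0)(x) + \int_0^t \big(G(\cdot,t-s)\star h[\chi](\cdot,s)\big)(x)\,ds,
\]
with $h[\chi] = g^-(v) + (g^+(v)-g^-(v))\chi$, the bound $\underline g \le g^\pm \le \overline g$ from \textbf{(B)}$(ii)$ shows that $\|h[\chi]\|_\infty$ is bounded independently of $\chi$; since $\|DG(\cdot,\tau)\|_{L^1(\R^N)} \sim \tau^{-1/2}$ is integrable on $[0,T]$ and $v_0\in C^1$ has bounded gradient, one obtains uniform $L^\infty$ and uniform spatial Lipschitz bounds on $v$, together with a uniform modulus of continuity in time. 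Because $\alpha$ is Lipschitz, the velocity $c[\chi]=\alpha(v[\chi])$ is then bounded and Lipschitz continuous in $x$, uniformly in $\chi$ and $t$.

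Granting these uniform bounds, most assumptions are routine. Assumption \textbf{(H1-$X$)} follows from the standard $L^1$-viscosity theory for geometric eikonal equations $u_t=c(x,t)|Du|$ with $c$ bounded, measurable in $t$ and Lipschitz in $x$ (existence, uniqueness and comparison as in \cite{nunziante92,bourgoing04a,ley01}); since the equation is geometric, constants are solutions, so comparison yields the uniform bound $|u|_\infty \le \|u_0\|_\infty =: L$. For \textbf{(H2-$X$)}$(i)$ the estimate $|c(x_1,t)|p_1|-c(x_2,t)|p_2|| \le \|c\|_\infty|p_1-p_2| + \mathrm{Lip}_x(c)\,|x_1-x_2|\,|p_2|$ gives a Lipschitz modulus on every compact $K$, with $t$-independent constants, hence converging to $0$ in $L^1(0,T)$. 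For \textbf{(H2-$X$)}$(ii)$ the geometric structure makes $H[\chi]$ vanish as $p\to0$, so $f\equiv0$ works, uniformly in $\chi$. Finally \textbf{(H3-$X$)} is trivial since $H$ does not depend on $r$.

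The heart of the proof is \textbf{(H2-$X$)}$(iii)$, the weak-$*$ stability of the time-integrated Hamiltonian, and this is where I expect the main obstacle, precisely because of the \emph{nonlinear} dependence of the right-hand side of \eqref{heat-eq} on $v$. If $\chi_n \rightharpoonup \chi$ weakly-$*$, the uniform bounds above make $\{v_n\}=\{v[\chi_n]\}$ precompact in $C_{\mathrm{loc}}(\R^N\times[0,T])$ by Arzel\`a--Ascoli, so along a subsequence $v_n \to \tilde v$ locally uniformly. Then $g^\pm(v_n)\to g^\pm(\tilde v)$ \emph{strongly} (locally uniformly and bounded), while $\chi_n\rightharpoonup\chi$ weakly-$*$; the product of a strongly convergent and a weakly-$*$ convergent sequence passes to the limit, so $h[\chi_n]\rightharpoonup g^-(\tilde v)+(g^+(\tilde v)-g^-(\tilde v))\chi$ weakly-$*$. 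Since for fixed $(x,t)$ the kernel $(y,s)\mapsto G(x-y,t-s)\1_{\{s<t\}}$ belongs to $L^1(\R^N\times[0,T])$ (its total mass being $\int_0^t 1\,ds=t$), one may pass to the limit in Duhamel's formula and identify $\tilde v$ as the solution of \eqref{heat-eq} associated with $\chi$; uniqueness for \eqref{heat-eq} then forces convergence of the full sequence, $v_n\to v[\chi]$. A dominated convergence argument finally gives $\int_0^t \alpha(v_n(x,s))|p|\,ds \to \int_0^t \alpha(v(x,s))|p|\,ds$ locally uniformly in $t$, which is exactly \textbf{(H2-$X$)}$(iii)$. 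Theorem~\ref{existence-gene} then provides a weak solution of \eqref{eq-gene-fn}, equivalently of \eqref{system}.

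For the last assertion, when $\alpha\ge0$ the velocity $c[\chi]=\alpha(v)\ge0$, and the argument reduces to a non-fattening statement analogous to the dislocation case: combining the nonnegativity of the velocity with the nondegeneracy \eqref{borne-inf0} of $u_0$, one propagates the lower bound $|u|+|Du|\ge\eta$ in the viscosity sense and deduces that $\{u(\cdot,t)=0\}$ has zero Lebesgue measure for almost every $t$, i.e. $\1_{\{u(\cdot,t)>0\}}=\1_{\{u(\cdot,t)\ge0\}}$ a.e.; this is precisely the conclusion established for such geometric equations in \cite{ggi92,ss96} and \cite[Theorem~1.3]{bclm07}, which applies here.
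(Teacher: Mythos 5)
Your proposal is correct and follows essentially the same route as the paper: reduction to Theorem~\ref{existence-gene}, uniform (in $\chi$) bounds and moduli for $v[\chi]$ via the Green-function representation (the paper's Lemma~\ref{reg-velocity}), verification of \textbf{(H2-$X$)}$(iii)$ by Arzel\`a--Ascoli compactness of $(v_n)$ plus the splitting of the Duhamel integral into a weak-$*$ term tested against an $L^1$ kernel and a dominated-convergence remainder, and finally non-fattening under $\alpha\geq 0$ and \eqref{borne-inf0} for the classical-solution statement. The only cosmetic difference is in the references invoked for the standard ingredients (the paper cites \cite{bss93,ley01} for non-fattening), not in the mathematics.
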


\begin{proof}
The explicit 
resolution of the heat equation \eqref{heat-eq} 
shows that for any $(x,t) \in \R^N \times (0,T)$,
\begin{eqnarray}
&& v(x,t) = \int_{\R^N} G(x-y,t)  \, v_0(y) \, dy \label{form-chal}\\ 
&& \hspace*{1cm}
+ \int_0^t \int_{\R^N} G(x-y,t-s)  \, [g^+(v)\chi +g^-(v)(1-\chi )](y,s) \, dy ds,
\nonumber
\end{eqnarray}
where $G$ is the Green function defined by
\begin{equation} \label{green}
G(y,s)=\frac{1}{(4\pi s)^{N/2}} e^{-\frac{|y|^2}{4s}}.  
\end{equation}

It is then easy to obtain the following lemma:

\begin{lemma}\label{reg-velocity}  
Assume that $g^-$, $g^+$ and $v_0$ satisfy \noindent{\bf (B)}. 
For $\chi \in L^{\infty}(\R^N \times [0,T];[0,1])$, let $v$ be the
solution of \eqref{heat-eq}. Set $\gamma=\max \{|\underline{g}|,|\overline{g}|\}.$ 
Then there exists a constant $k_N$ depending only on $N$ such that:
\begin{itemize}

\item[$(i)$] $v$ is uniformly bounded: for all $(x,t) \in \R^N \times [0,T],$
$$
|v(x,t)| \leq \|v_0\|_{\infty} + \gamma t.
$$ 

 \item[$(ii)$] $v$ is continuous on $\R^N \times [0,T]$.

 \item[$(iii)$]  For any $t \in [0,T]$, $v(\cdot,t)$ is of 
class $C^1$ in $\R^N$.

 \item[$(iv)$] For all $t \in [0,T]$, for all $x,y \in \R^N$,  
$$|v(x,t)-v(y,t)|\leq (\, \|Dv_0\|_{\infty} +\gamma k_N \,\sqrt{t}) \,|x-y|.$$

 \item[$(v)$]  For all $0\leq s \leq t \leq T,$ for all $x \in \R^N$,  
$$
|v(x,t)-v(x,s)|\leq k_N
(\|Dv_0\|_{\infty}+ \gamma k_N 
\,\sqrt{s}) \,\sqrt{t-s} + \gamma (t-s).
$$

\end{itemize}
\end{lemma}  
In particular the velocity $c[\chi]$ in \eqref{eq-gene-fn}  
is bounded, continuous on $\R^N \times [0,T]$ and Lipschitz continuous in space, 
uniformly with respect to $\chi$. From general results on existence and comparison 
for classical viscosity solutions of the eikonal equation 
with Lipschitz continuous initial datum (see for instance
\cite[Theorem 2.1]{bclm07}), we obtain that {\bf (H1-$X$)} is satisfied.

\vspace{.2cm}

Let us check \noindent{\bf (H2-$X$)} $(iii)$ ($(i)$ and $(ii)$ are straightforward): 
we claim that, if $\displaystyle \chi_n  \mathop{\rightharpoonup} \chi$ 
in $L^\infty\text{-weak-}*$, then $$\int_0^t c[\chi_n](x,s) \, ds 
\to \int_0^t c[\chi](x,s) \, ds $$ locally uniformly in $[0,T].$ 
Indeed, let $v_n$ (resp. $v$) be the solution of \eqref{heat-eq} 
with $\chi_n$ (resp. $\chi$) in the right-hand side. The estimates 
$(iv)$ and $(v)$ of Lemma \ref{reg-velocity} on the heat equation 
imply that we can extract by a diagonal argument a subsequence, still denoted $(v_n)$, which converges uniformly to some $w$ 
in $\bar{B}(0,R) \times [0,T]$ for any $R>0$. We know that for any $(x,t) 
\in \R^N \times (0,T)$,
\begin{eqnarray*}
&& v_n(x,t)=\int_{\R^N} G(x-y,t)  \, v_0(y) \, dy \\ 
&& \hspace*{1cm}+ \int_0^t \int_{\R^N} G(x-y,t-s)  \, [g^+(v_n)\chi_n+g^-(v_n)(1-\chi_n)](y,s) \, dy ds
\end{eqnarray*}
where $G$ is the Green function 
defined by \eqref{green}. As $n$ goes to infinity, we obtain
\begin{eqnarray*}
&& w(x,t)=\int_{\R^N} G(x-y,t)  \, v_0(y) \, dy  \\ 
&& \hspace*{1cm}+ \int_0^t \int_{\R^N} G(x-y,t-s)  \, [g^+(w)\chi+g^-(w)(1-\chi)](y,s) \, dy ds.
\end{eqnarray*}
Indeed
\begin{eqnarray*}
&&\int_0^t \int_{\R^N} G(x-y,t-s)  \, [g^+(v_n)\chi_n+g^-(v_n)(1-\chi_n)](y,s) \, dy ds\\
&& -\int_0^t \int_{\R^N} G(x-y,t-s)  \, [g^+(w)\chi+g^-(w)(1-\chi)](y,s)  \, dy ds\\
&=&\int_0^t \int_{\R^N} G(x-y,t-s) \, [(g^+(w)-g^-(w))(\chi_n-\chi)](y,s) \,  dy ds \\   
&& +\int_0^t \int_{\R^N} G(x-y,t-s) [\chi_n \, (g^+(v_n)-g^+(w)) + \\
&& \hspace*{3cm}\chi_n \, (g^-(w)-g^-(v_n))
+(g^-(v_n)-g^-(w))](y,s) \,  dy ds.
\end{eqnarray*}
The term
$$
\int_0^t \int_{\R^N} G(x-y,t-s) \, [(g^+(w)-g^-(w))(\chi_n-\chi)](y,s) \, dyds
$$
converges to 0 since $\displaystyle \chi_n  \mathop{\rightharpoonup} \chi$ 
in $L^\infty\text{-weak-}*$ and 
$$
|G(x-y,t-s)  \, (g^+(w)-g^-(w))(y,s)| \leq (\overline{g}-\underline{g}) \, G(x-y,t-s),
$$
which is an integrable function of $(y,s)$. The rest of the terms
converges to 0 
by dominated convergence since $v_n \to w$ pointwise in $\R^N \times [0,T]$ and
\begin{eqnarray*}
&& | \chi_n \, (g^+(v_n)-g^+(w)) + \chi_n \,
(g^-(w)-g^-(v_n))+(g^-(v_n)-g^-(w))|  \\
&\leq &  3M |v_n-w| \\
 &\leq & 6M C,
\end{eqnarray*}
where $M$ is a Lipschitz constant for $g^+$ and $g^-$, and $C$ 
is a uniform bound for $v_n$ and $w$ given by Lemma \ref{reg-velocity} $(i)$.

This shows that $w$ is the solution of \eqref{heat-eq}, so that $w=v$. 
In particular $(v_n)$ converges locally uniformly to $v$. We conclude that 
$$\int_0^t c[\chi_n](x,s)\, ds \to \int_0^t c[\chi](x,s)\, ds $$ 
locally uniformly for $t\in [0,T]$ thanks to the Lipschitz 
continuity of $\alpha$. This proves the claim that 
{\bf (H2-$X$)} holds, and we obtain existence of weak solutions 
to \eqref{eq-gene-fn} according to Theorem \ref{existence-gene}.

If $\alpha \geq 0$ and \eqref{borne-inf0} holds, then the fattening 
phenomenon for \eqref{eq-gene2} does not happen
(see \cite{bss93, ley01}) so that, if $u$ is any weak 
solution of \eqref{eq-gene2}, 
then for almost all $t \in [0,T]$ and almost everywhere in $\R^N$,  
$$  
\1_{\{u(\cdot,t)>0\}}=\1_{\{u(\cdot,t)\geq 0\}},  
$$  
which means that $u$ is a classical solution of \eqref{eq-gene-fn}. 
This completes the proof. 
\end{proof}

\section{Existence of weak solution to \eqref{eq-gene} (bounded case)}

It may happen that our Hamiltonian $ H[\1_{\{ u\geq 0 \}}]$ is only defined when 
the set $\{ u\geq 0 \}$ remains bounded: this is typically the case 
when a volume term is involved. For such cases, the existence of weak 
solutions may remain true, due to a particular framework.

\subsection{The existence theorem}

We use the following assumption:

\vspace{.2cm}

\noindent{\bf (H4)} There exists a bounded function $\overline{v}:\R^N\times [0,T]\to \R$ and $R_0 >0$ such that
\begin{itemize}

\item[$(i)$] $ \overline{v}(x,t) <0 \quad \hbox{if   }|x| \geq R_0\; ,\text{for any} \; t\in [0,T],$

\vspace{.2cm}

\item[$(ii)$] $ \overline{v}(x,0) \geq u_0 (x) \quad \hbox{in   } \R^N$,

\vspace{.2cm}

\item[$(iii)$] $\overline{v}$ is a supersolution of \eqref{eq-gene2} for all $\chi \in X$, where 
$$
X = \{\chi \in L^{\infty}(\R^N \times [0,T];[0,1])\ : \ 
\chi = 0 \; \hbox{a.e in  }\{\overline{v} <0\}\}\; .
$$
\end{itemize}
Assumption {\bf (H4)} is some kind of compatibility condition between 
the equation and the initial condition: of course, it implies that 
$u_0(x) <0$ if $|x|\geq R_0$ and, more or less, that the equation preserves 
this property (this is the meaning of $\overline{v}$).


Under this assumption, we obtain the following existence result:
\begin{theorem}  \label{existence-gene2}
Assume {\bf (H4)} and that {\bf (H1-$X$), (H2-$X$)} and {\bf
(H3-$X$)} hold with $X$ given in {\bf (H4)}.
Then there exists at least a weak solution to \eqref{eq-gene}. 
\end{theorem}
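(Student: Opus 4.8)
The plan is to repeat the Kakutani fixed-point argument of Theorem \ref{existence-gene} essentially word for word, the only — but crucial — difference being that the feasible set is now the smaller $X$ from {\bf (H4)} rather than all of $L^\infty(\R^N\times[0,T];[0,1])$. I would again introduce the set-valued map $\xi$ sending $\chi$ to the collection of $\chi'$ with $\1_{\{u(\cdot,t)>0\}}\le\chi'(\cdot,t)\le\1_{\{u(\cdot,t)\ge0\}}$ for almost all $t$, where $u$ is the $L^1$-viscosity solution of \eqref{eq-gene2} furnished by {\bf (H1-$X$)}. Since {\bf (H1-$X$)}, {\bf (H2-$X$)} and {\bf (H3-$X$)} are assumed to hold for this $X$, the arguments showing that $\xi(\chi)$ is non-empty, convex and weak-$*$ compact, and that $\xi$ is upper semicontinuous for the weak-$*$ topology, transfer verbatim; the two genuinely new verifications are that $X$ itself is convex and weak-$*$ compact, and that $\xi$ maps $X$ into $X$.

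First I would check that $X$ is admissible for Kakutani. Convexity is immediate and $X$ is bounded, so it suffices to see that it is weak-$*$ closed. The defining constraint ``$\chi=0$ a.e.\ on the fixed set $\{\overline{v}<0\}$'' can be written, because $\chi\ge0$, as $\int \chi\,\varphi=0$ for every nonnegative $\varphi\in L^1(\R^N\times[0,T])$ supported in $\{\overline{v}<0\}$; each such equality defines a weak-$*$ closed set, and $X$ is their intersection, hence weak-$*$ closed and therefore weak-$*$ compact. The same remark shows that the weak-$*$ limit of a sequence in $X$ remains in $X$, which is all that the upper-semicontinuity step (taken over from Theorem \ref{existence-gene}) needs in order to conclude $\chi\in X$.

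The heart of the matter — and the step I expect to be the main obstacle — is showing $\xi(X)\subset X$, for which {\bf (H4)} is precisely designed. Fixing $\chi\in X$ and letting $u$ be the associated solution of \eqref{eq-gene2}, I would note that by {\bf (H4)}$(iii)$ the function $\overline{v}$ is a supersolution of \eqref{eq-gene2} for this $\chi$, that $\overline{v}(\cdot,0)\ge u_0=u(\cdot,0)$ by {\bf (H4)}$(ii)$, and that both functions are bounded; the global comparison principle {\bf (H1-$X$)}$(ii)$ then yields $u\le\overline{v}$ on $\R^N\times[0,T]$. Consequently, wherever $\overline{v}<0$ one has $u<0$, so $\1_{\{u(\cdot,t)\ge0\}}=0$ a.e.\ on $\{\overline{v}(\cdot,t)<0\}$, and every $\chi'\in\xi(\chi)$ satisfies $\chi'\le\1_{\{u\ge0\}}=0$ there; thus $\chi'\in X$. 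In particular, by {\bf (H4)}$(i)$ the support of $\1_{\{u\ge0\}}$ (and hence of $\chi'$) is contained in $\overline{B}(0,R_0)$, which is exactly what makes $H[\chi']$ well-defined in the bounded case.

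With these two additions in place, the remainder is identical to the proof of Theorem \ref{existence-gene}: given $\chi_n\in X\rightharpoonup\chi$ and $\chi'_n\in\xi(\chi_n)\rightharpoonup\chi'$, Barles' stability result applied through {\bf (H2-$X$)} identifies the half-relaxed limits $\overline{u}$ and $\underline{u}$ of the solutions $u_n$ as a sub- and a supersolution for $\chi$; the barrier $\psi_\e$ together with {\bf (H3-$X$)} gives $\overline{u}(\cdot,0)\le u_0\le\underline{u}(\cdot,0)$; and {\bf (H1-$X$)}$(ii)$ forces $\overline{u}=\underline{u}=:u$ with local uniform convergence $u_n\to u$. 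A Fatou's-lemma passage to the limit in the indicator inequalities then yields $\chi'\in\xi(\chi)$, so $\xi$ is upper semicontinuous. Kakutani's theorem \cite[Theorem 3 p. 232]{ac84} provides a fixed point $\chi\in\xi(\chi)$, whose associated $u$ is the desired weak solution of \eqref{eq-gene}.
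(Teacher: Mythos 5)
Your proposal is correct and follows essentially the same route as the paper: the paper's proof likewise reduces to the argument of Theorem \ref{existence-gene}, with the one new ingredient being exactly your key step, namely that {\bf (H4)} together with the comparison principle {\bf (H1-$X$)}$(ii)$ yields $u\leq\overline{v}$, hence $u<0$ on $\{\overline{v}<0\}$, so that $\xi$ maps $X$ into $X$. Your explicit verification that $X$ is convex and weak-$*$ compact is left implicit in the paper, but it is the right (and routine) check.
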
 

\begin{proof} The proof follows the arguments of the proof of 
Theorem~\ref{existence-gene}: essentially, the only change is the choice of $X$.

From {\bf (H4)}, the set-valued mapping 
\begin{eqnarray*}
\begin{array}{cccl}  
\xi : & X & \rightrightarrows &  X\\
      &  \chi                            &\mapsto            &  \big\{
      \chi'\; : \; \1_{\{ u(\cdot,t) > 0 \}}  \leq \chi'(\cdot,t) \leq
      \1_{\{ u(\cdot,t) \geq 0 \}} \; \text{for
        almost all} \;  t \in [0,T] ,\\ 
      &                                  &                   & 
      \quad \text{where} \; u \; \text{is the $L^1$-viscosity solution
        of} 
\; \eqref{eq-gene2} \big\},  
\end{array}
\end{eqnarray*}  
is well-defined : indeed, for any $\chi \in X$, $\overline{v}$ is a supersolution of 
the $H[\chi]$-equation and we have $ u_0(x) \leq \overline{v}(x,0)$ in $\R^N$. 
Therefore, by {\bf (H1-$X$)} (comparison),
$$  u(x,t) \leq \overline{v}(x,t) 
\quad \hbox{in   }\R^N\times [0,T]\; .$$
In particular, $u(x,t) < 0$ if $\overline{v}(x,t) <0$ 
and clearly any $\chi'$ in $\xi(\chi)$ is in $X$.

We conclude exactly as in the proof of Theorem~\ref{existence-gene}.
\end{proof}

\subsection{Applications}

The typical cases we have in mind are geometrical equations;
for instance, 
$$ u_t=\left[\div \left(\frac{Du}{|Du|}\right) 
+\beta\left(\Lc^N(\{u(\cdot,t)\geq 0\})\right)\right]|Du|\; ,
$$
where $\beta : \R \to \R$ is a continuous function
and $\Lc^N$ denotes the Lebesgue measure in $\R^N.$

In order to test condition {\bf (H4)}, it is natural to consider 
radially symmetric supersolutions and typically, we look for 
supersolutions of the following form:
$$ \psi(x,t) := R(t) - |x|\; ,$$
where $R(\cdot)$ is a $C^1$-function of $t$. We point out two 
key arguments to justify this choice: first $\psi$ is concave 
in $x$, and checking the viscosity supersolution property is 
equivalent to checking it at points where $\psi$ is smooth 
(because of the form of $\psi$). Next if $\psi$ is a 
supersolution of the above pde, one can use (if necessary) 
a change of function $\psi \to \varphi(\psi)$, with 
$\varphi'>0$ to ensure that $\varphi(\psi) \geq u_0$ in $\R^N$, and such that $\varphi$ is bounded, in order to use the comparison principle. 

Plugging $\psi$ in the equation, we obtain that $\psi$ is a supersolution if
$$
R'(t) \geq -\frac{(N-1)}{|x|} + \beta (\omega_N R^N(t))\;,
$$
where $\omega_N={\mathcal L}^N(B(0,1)).$
The curvature term $(N-1)/|x|$ is not going 
to play any major role here since we are concerned with large $R$'s and the equation should hold on the $0$-level set of $\psi$, 
i.e. for $|x| = R$.

Therefore let us consider the ordinary differential equation (ode)
$$R'(t) = \beta (\omega_N R^N(t))\quad \hbox{with  }R(0) = R_0\; .$$
A natural condition for this ode to have solutions which do not 
blow up in finite time is the sublinearity in $R$ of the right-hand 
side. This leads to the following conditions on $\beta$:
$$ \beta (t) \leq L_1 + L_2 t^{1/N} \quad \hbox{for any  }t>0\; ,$$
for some constants $L_1,L_2>0$. Under this condition, we easily build a function $\overline{v}$ satisfying ${\bf (H4)}$. However if this condition is not satisfied, Theorem~\ref{existence-gene2} 
provides only the small time existence of solutions.

We complete this example by recalling that, for all 
$\chi \in L^{\infty}(\R^N \times [0,T];[0,1])$ with bounded support,  
we have a comparison result for the equation
$$ u_t=\left[\div \left(\frac{Du}{|Du|}\right) 
+\beta\left(\int_{\R^N}\chi(x,t)dx)\right)\right]|Du|\quad \hbox{in  }\R^N \times [0,T]
$$
(See Nunziante \cite{nunziante90} and Bourgoing \cite{bourgoing04a,bourgoing04b}).

%


%



\end{document}